\newenvironment{keywords}{\medskip\textbf{Keywords:}}{}
\newenvironment{MSC}{\medskip\textbf{MSC code (2020).}}{}
\newtheorem{theorem}{Theorem}
\newtheorem{lemma}{Lemma}
\DeclareMathOperator{\diag}{diag}
\DeclareMathOperator{\Span}{span}
\newcommand{\fro}{\mathsf F}
\newcommand{\mi}{\mathrm{i}}
\newcommand{\mj}{\mathrm{j}}
\newcommand{\mk}{\mathrm{k}}
\newcommand*{\set}[1]{\left\lbrace#1\right\rbrace}
\newcommand*{\mat}[1]{\bm{#1}}
\newcommand*{\vect}[1]{\bm{#1}}
\newcommand*{\herm}{^{\mathsf H}}
\newcommand*{\trans}{^{\top}}
\newcommand*{\win}{\mathsf{win}}
\newcommand{\bmat}[1]{\begin{bmatrix}#1\end{bmatrix}}
\newcommand*{\conj}[1]{\overline{#1}}
\def\adots{\mathinner{\mkern2mu\raise1pt\hbox{.}\mkern2mu
    \raise4pt\hbox{.}\mkern2mu\raise7pt\hbox{.}\mkern1mu}}
\DeclareFontFamily{U}{matha}{}
\DeclareFontShape{U}{matha}{m}{n}{
  <-5.5> matha5 <5.5-6.5> matha6 <6.5-7.5> matha7
  <7.5-8.5> matha8 <8.5-9.5> matha9 <9.5-11> matha10
  <11-> matha12
}{}
\DeclareSymbolFont{matha}{U}{matha}{m}{n}
\DeclareFontFamily{U}{mathx}{}
\DeclareFontShape{U}{mathx}{m}{n}{<-> mathx10}{}
\DeclareSymbolFont{mathx}{U}{mathx}{m}{n}
\DeclareMathDelimiter{\ldbrack}{\mathopen}{matha}{"76}{mathx}{"30}
\DeclareMathDelimiter{\rdbrack}{\mathclose}{matha}{"77}{mathx}{"38}
\newcommand*{\eqclass}[1]{\ldbrack#1\rdbrack}
\definecolor{bkgndcolor}{rgb}{0.78,0.93,0.8}
\begin{document}

\title{On Eigenvector Computation and Eigenvalue Reordering for the
Non-Hermitian Quaternion Eigenvalue Problem}
\author[1]{Zhigang Jia}
\author[2,3]{Meiyue Shao}
\author[4,2]{Yanjun Shao}
\affil[1]{School of Mathematics and Statistics and RIMS, Jiangsu Normal
University, Xuzhou 221116, China}
\affil[2]{School of Data Science, Fudan University, Shanghai 200433, China}
\affil[3]{Shanghai Key Laboratory for Contemporary Applied Mathematics, Fudan
University, Shanghai 200433, China}
\affil[4]{Department of Computer Science, Yale University, New Haven, CT 06511,
USA}

\maketitle

\begin{abstract}
In this paper we present several additions to the quaternion QR algorithm,
including algorithms for eigenvector computation and eigenvalue reordering.
A key outcome of the eigenvalue reordering algorithm is that the aggressive
early deflation (AED) technique, which significantly enhances the convergence
of the QR algorithm, is successfully applied to the quaternion eigenvalue
problem.
We conduct numerical experiments to demonstrate the efficiency and
effectiveness of the proposed algorithms.

\begin{keywords}
Non-Hermitian eigenvalue problem,
quaternion QR algorithm,
eigenvector,
eigenvalue reordering,
aggressive early deflation
\end{keywords}

\begin{MSC}
65F15, 15A33, 15A18
\end{MSC}
\end{abstract}

\section{Introduction}
\label{sec:introduction}

Let \(\mathbb H=\Span_{\mathbb R}\set{1,\mi,\mj,\mk}\) be the skew field of
quaternions, where the basis satisfies
\[
\mi^2=\mj^2=\mk^2=\mi\mj\mk=-1.
\]
The quaternion (right) eigenvalue problem
\[
\mat A\vect x=\vect x\lambda,
\qquad (\mat A\in\mathbb H^{n\times n},~
\vect x\in\mathbb H^n\setminus\set{0},~
\lambda\in\mathbb H)
\]
arises in a variety of applications, including quantum
mechanics~\cite{Adler1995,JonesSmith2010},
image processing~\cite{LM2004,LS2003}, etc.
In this work, we restrict ourselves to the dense, non-Hermitian case (i.e.,
the matrix \(\mat A\) is a dense, non-Hermitian quaternion matrix).
The non-Hermitian quaternion eigenvalue problem naturally emerges in
non-Hermitian quantum mechanics~\cite{JonesSmith2010}.

The dense non-Hermitian quaternion eigenvalue problem has been studied
in~\cite{BBM1989,JWZC2018}.
In~\cite{BBM1989} the (nonsymmetric) Francis QR algorithm~\cite{Francis1961,
Francis1962} was successfully extended to compute the quaternion Schur
decomposition.
Recently the quaternion QR algorithm was reformulated into a
structure-preserving manner in order to improve performance~\cite{JWZC2018}.
However, a few closely related computational tasks, such as eigenvector
computation and eigenvalue reordering, are not discussed in these works.
Moreover, in the past decades, the Francis QR algorithm has been largely
improved by modern techniques such as multishift QR sweeps and aggressive
early deflation (AED)~\cite{BD1989,BBM2002a,BBM2002b,Byers2007,GKK2010,
GKKS2015,KKS2012}.
These modern techniques have not yet been incorporated into the quaternion QR
algorithm.

In this work we discuss several aspects in the dense, non-Hermitian quaternion
eigenvalue problem that have not been carefully addressed in the existing
literature.
We first establish tools to effectively solve upper triangular quaternion
Sylvester equations.
Then the quaternion Sylvester solvers are adopted to tackle higher level
problems, including the computation of all or selected eigenvectors, the
eigenvalue swapping problem, as well as the application of the AED technique.
Thanks to these developments, we obtain a dense, non-Hermitian quaternion
eigensolver that is more efficient and complete.

This paper is an extension of the undergraduate thesis of the third
author~\cite{Shao2023}.
The rest of the paper is organized as follows.
In Section~\ref{sec:preliminary}, we briefly review some basics of the
quaternion (right) eigenvalue problem.
In Section~\ref{sec:eigenvector}, we discuss quaternion Sylvester equation
solvers and develop algorithms for eigenvector computation for upper
triangular quaternion matrices.
In Section~\ref{sec:swap}, we propose the eigenvalue swapping algorithm and
develop the AED technique.
Numerical experiments are presented in Section~\ref{sec:experiments}.

\section{Quaternion right eigenvalue problem}
\label{sec:preliminary}
In the following we provide a brief review of the quaternion right eigenvalue
problem.
We assume that readers are already familiar with quaternion algebra.

Given a quaternion matrix \(\mat A\in\mathbb H^{n\times n}\), the right
eigenvalue problem is to find a scalar \(\lambda\in\mathbb H\) and a vector
\(\mat x\in \mathbb H^n\setminus\set{0}\) such that
\(\mat A\mat x=\mat x\lambda\).
Recall that any two quaternions \(\xi\) and \(\eta\) are \emph{similar} to
each other if there exists a unit quaternion~\(\omega\) such that
\(\eta=\conj{\omega}\xi\omega\).%
\footnote{A quaternion \(\omega\) is called a \emph{unit quaternion} if it
satisfies \(\lvert\omega\rvert=(\conj\omega\omega)^{1/2}=1\).}
Let \(\eqclass{\xi}\) denote the set of quaternions similar to \(\xi\), and
let~\(\mathbb C_+\) denote the upper half-plane including the real axis.
Then \(\eqclass{\xi}\cap\mathbb C_+\) contains a unique element, denoted
by~\(\xi_c\); see, e.g., \cite[Lemma~2.1]{Zhang1997}.
If \(\xi\) is an eigenvalue of \(\mat A\), then so is any other element of
\(\eqclass{\xi}\).
We call~\(\xi_c\) a \emph{standard eigenvalue} or a \emph{standardized
eigenvalue} of \(\mat A\).
Then every eigenvalue of \(\mat A\) can be standardized.
In the rest of this paper we assume that all eigenvalues are already
standardized unless otherwise specified.

From a numerical perspective, if many or all eigenvalues of \(\mat A\) are of
interest, it is recommended to compute the \emph{Schur decomposition}
\begin{equation}
\label{eq:schur_form}
\mat A=\mat U\mat T\mat U\herm,
\end{equation}
where \(\mat U\in\mathbb H^{n\times n}\) is unitary and
\(\mat T\in\mathbb H^{n\times n}\) is upper triangular with diagonal entries
chosen from~\(\mathbb C_+\)~\cite{Brenner1951}.
The computation of~\eqref{eq:schur_form} can be accomplished by the quaternion
QR algorithm~\cite{BBM1989};
see Algorithm~\ref{alg:quaternion_schur}.

\begin{algorithm}
\caption{Quaternion QR Algorithm}
\label{alg:quaternion_schur}
\begin{algorithmic}[1]
\REQUIRE A quaternion matrix \(\mat A\in\mathbb H^{n\times n}\).
\ENSURE A unitary matrix \(\mat U\) and an upper triangular matrix \(\mat T\)
satisfying~\eqref{eq:schur_form}.

\STATE Reduce \(\mat A\) to an Hessenberg matrix \(\mat H_0\) using unitary
       similarity: \(\mat H_0=\mat U\herm\mat A\mat U\).
\WHILE{not converged}
    \STATE Generate the (real) shifting polynomial \(p_k(\cdot)\) for
           \(\mat H_k\).
    \STATE Update \(\mat H_{k+1}\gets\mat Q_k\herm\mat H_k\mat Q_k\) using an
           implicit QR sweep, where \(\mat Q_k\mat R_k\) is the QR
           factorization of \(p_k(\mat H_k)\).
    \STATE Update \(\mat U\gets\mat U\cdot\mat Q_k\).
\ENDWHILE
\STATE Standardize the diagonal entries of \(\mat H_k\), and set
\(\mat T\gets\mat H_k\).
\end{algorithmic}
\end{algorithm}

We would like to make two remarks here.
First, we shall see in Section~\ref{subsec:motivation} that eigenvector
computation is a non-trivial task due to the non-commutativity of quaternion
algebra.
In practice it is often required to compute a few or all eigenvectors
of~\(\mat A\) after the Schur decomposition~\eqref{eq:schur_form} is
calculated.
However, neither~\cite{BBM1989} nor~\cite{JWZC2018} provided a detailed
discussion of how this can be done.
We shall discuss eigenvector computation in Section~\ref{sec:eigenvector}.
Second, in theory, the diagonal entries of \(\mat T\) can take any prescribed
ordering.
This can be easily shown by induction.
In Section~\ref{sec:swap}, we shall discuss how to reorder the diagonal
entries in the Schur form in a numerically stable manner.

\section{Eigenvector computation from the quaternion Schur form}
\label{sec:eigenvector}

\subsection{Motivation}
\label{subsec:motivation}
For a complex matrix \(\mat A\in\mathbb C^{n\times n}\), if an eigenvalue of
the matrix, \(\lambda\), is known, the corresponding eigenvector \(\mat x\)
can be computed by solving the homogeneous linear equation
\begin{equation}
\label{eq:shifted}
(\mat A-\lambda\mat I)\mat x=\mat 0.
\end{equation}
Since the coefficient matrix \(\mat A-\lambda\mat I\) is singular, we usually
impose an additional normalization assumption on \(\mat x\), e.g.,
\(\mat x(1)=1\), to ensure that the system~\eqref{eq:shifted} has a unique
solution, if \(\lambda\) is of multiplicity one.

The situation becomes more complicated for quaternion matrices.
Let us assume that \(\mat A\in\mathbb H^{n\times n}\) has a known single
eigenvalue \(\lambda\in\mathbb C_+\).
Due to the non-commutativity of quaternion algebra, we need
adjust~\eqref{eq:shifted} to a homogeneous Sylvester equation
\begin{equation}
\label{eq:shifted2}
\mat A\mat x-\mat x\lambda=\mat 0.
\end{equation}
However, there is a new obstacle that in general we \emph{cannot} impose
\(\mat x(i)=1\) for any \(i\), even if we can already ensure \(\mat x(i)\neq0\).
Therefore, equation~\eqref{eq:shifted2} is much more difficult to solve
compared to~\eqref{eq:shifted}.%
\footnote{One way to solve~\eqref{eq:shifted2} is to embed it into a
homogeneous Sylvester equation over \(\mathbb C\).
The price to pay is that the matrix dimension is doubled.}

For instance, consider
\[
\mat A=\bmat{2-\mi-2\mj & -1+\mi+2\mj \\ 2-2\mi-2\mj & -1+2\mi+2\mj}.
\]
The eigenvalues of \(\mat A\) are \(\lambda_1=1\) and \(\lambda_2=\mi\),
with eigenvectors
\[
\mat x_1=\bmat{1 \\ 1}, \qquad \mat x_2=\bmat{1-\mj+\mk \\ 2-\mj+\mk}.
\]
It is impossible to normalize any entry of \(\mat x_2\) to \(1\) or any other
complex number, unless \(\lambda_2\) is allowed to be replaced by a
non-complex one in \(\eqclass{\lambda_2}\).
Without the knowledge of \(\mat x_2\), it is even unclear how to properly
choose an element from \(\eqclass{\lambda_2}\) to make~\eqref{eq:shifted2}
easy to solve.

However, the situation becomes much simpler when \(\mat A\) is in the Schur
form.
In the following, we shall show that the eigenvectors of an upper triangular
quaternion matrix can be easily computed without augmenting the matrix
dimension.

\subsection{Upper triangular Sylvester equations}
We first discuss how to solve upper triangular Sylvester equations that
frequently arise in quaternion eigenvalue problems.
The simplest case is the scalar Sylvester equation.
Lemma~\ref{lem:Sylvester-1x1} characterizes the nondegenerate case.
Although this result is well-known, we provide here a constructive proof that is
suitable for numerical computation.

\begin{lemma}[\cite{Johnson1944}]
\label{lem:Sylvester-1x1}
Let \(\alpha\), \(\beta\), \(\gamma\in\mathbb H\).
Then there exists a unique \(\chi\in\mathbb H\) such that
\(\alpha\chi-\chi\beta=\gamma\) if and only if
\(\eqclass{\alpha}\neq\eqclass{\beta}\).
\end{lemma}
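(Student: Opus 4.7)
The plan is to reduce to the case $\alpha,\beta\in\mathbb{C}_+$ via conjugation by unit quaternions, then to decompose $\chi$ over the complex subfield $\Span_{\mathbb{R}}\set{1,\mi}$ so that the Sylvester equation splits into two scalar complex linear equations whose solvability is transparent. This simultaneously yields the if-and-only-if and a closed-form expression for $\chi$ that is suitable for numerical implementation, which is why the constructive style is worth carrying out rather than merely invoking an abstract dimension count.

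First I would pick unit quaternions $\omega_1,\omega_2$ with $\alpha_c=\conj{\omega_1}\alpha\omega_1\in\mathbb{C}_+$ and $\beta_c=\conj{\omega_2}\beta\omega_2\in\mathbb{C}_+$, and set $\tilde\chi=\conj{\omega_1}\chi\omega_2$, $\tilde\gamma=\conj{\omega_1}\gamma\omega_2$. The equation becomes $\alpha_c\tilde\chi-\tilde\chi\beta_c=\tilde\gamma$. Since $\chi\mapsto\tilde\chi$ is an $\mathbb{R}$-linear bijection of $\mathbb{H}$ and similarity classes are left unchanged, both existence/uniqueness and the condition $\eqclass{\alpha}\neq\eqclass{\beta}$ are preserved. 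Hence I may assume $\alpha,\beta\in\mathbb{C}_+$ from here on.

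Next, write $\chi=\chi_1+\chi_2\mj$ and $\gamma=\gamma_1+\gamma_2\mj$ with $\chi_1,\chi_2,\gamma_1,\gamma_2\in\Span_{\mathbb{R}}\set{1,\mi}$. Using the identity $\mj z=\conj{z}\mj$ valid for every complex $z$, a short expansion gives
\[
\alpha\chi-\chi\beta=(\alpha-\beta)\chi_1+(\alpha-\conj{\beta})\chi_2\mj.
\]
Because $1$ and $\mj$ are linearly independent over the complex subfield acting on the right, matching components yields the two decoupled scalar complex equations $(\alpha-\beta)\chi_1=\gamma_1$ and $(\alpha-\conj{\beta})\chi_2=\gamma_2$. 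These are uniquely solvable for every right-hand side if and only if $\alpha\neq\beta$ and $\alpha\neq\conj{\beta}$, and the explicit divisions in $\mathbb{C}$ combined with $\chi=\omega_1(\chi_1+\chi_2\mj)\conj{\omega_2}$ give the numerical recipe.

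Finally I would verify that, for $\alpha,\beta\in\mathbb{C}_+$, the combined condition "$\alpha\neq\beta$ and $\alpha\neq\conj{\beta}$" is equivalent to $\eqclass{\alpha}\neq\eqclass{\beta}$. Since each element of $\mathbb{C}_+$ is the unique standardized representative of its similarity class, $\eqclass{\alpha}=\eqclass{\beta}$ is equivalent to $\alpha=\beta$. Moreover, $\alpha=\conj{\beta}$ together with $\alpha,\beta\in\mathbb{C}_+$ forces both imaginary parts to vanish, hence also $\alpha=\beta$, so the seemingly additional constraint $\alpha\neq\conj{\beta}$ collapses into $\alpha\neq\beta$. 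This collapse is the one place requiring genuine attention rather than routine calculation, and I expect it to be the main obstacle; the essential input is that $\mathbb{C}_+$ is a fundamental domain for similarity within $\mathbb{C}$, without which the second scalar equation would contribute an extra obstruction not present in the similarity-class formulation.
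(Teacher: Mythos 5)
Your proposal is correct and follows essentially the same route as the paper's proof: conjugate by unit quaternions to standardize $\alpha$ and $\beta$ into $\mathbb{C}_+$, split $\chi=\chi_1+\chi_2\mj$ over the complex subfield so the equation decouples into $(\alpha-\beta)\chi_1=\gamma_1$ and $(\alpha-\conj{\beta})\chi_2=\gamma_2$, and read off the solvability condition. Your explicit observation that $\alpha=\conj{\beta}$ with both in $\mathbb{C}_+$ forces $\alpha=\beta$ is a point the paper leaves implicit, but the argument is the same.
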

\begin{proof}
Let \(\xi_1\) and \(\xi_2\) be unit quaternions such that
\(\tilde\alpha=\conj\xi_1\alpha\xi_1\in\mathbb C_+\) and
\(\tilde\beta=\conj\xi_2\beta\xi_2\in\mathbb C_+\).
Then the Sylvester equation \(\alpha\chi-\chi\beta=\gamma\) reduces to
\begin{equation}
\label{eq:sylv-scalar-complex}
\tilde\alpha\tilde\chi-\tilde\chi\tilde\beta=\tilde\gamma,
\end{equation}
where \(\tilde\chi=\conj\xi_1\chi\xi_2\) and
\(\tilde\gamma=\conj\xi_1\gamma\xi_2\).
By representing \(\tilde\chi\) and \(\tilde\gamma\), respectively, as
\[
\tilde\chi=\tilde\chi_1+\tilde\chi_2\mj,
\qquad
\tilde\gamma=\tilde\gamma_1+\tilde\gamma_2\mj,
\qquad
(\tilde\chi_1,\,\tilde\chi_2,\,\tilde\gamma_1,\,\tilde\gamma_2\in\mathbb C),
\]
equation~\eqref{eq:sylv-scalar-complex} splits into
\[
(\tilde\alpha-\tilde\beta)\tilde\chi_1=\tilde\gamma_1,
\qquad (\tilde\alpha-\conj{\tilde\beta})\tilde\chi_2=\tilde\gamma_2,
\]
which has a unique solution
\begin{equation}
\label{eq:sylv-scalar-sol}
\tilde\chi_1=\frac{\tilde \gamma_1}{\tilde\alpha-\tilde\beta},
\qquad \tilde\chi_2=\frac{\tilde \gamma_2}{\tilde\alpha-\conj{\tilde\beta}}
\end{equation}
if and only if \(\tilde\alpha\neq\tilde\beta\).
\end{proof}

We shall see later that scalar Sylvester equations are frequently encountered
in dense eigensolvers.
Since we impose that all the eigenvalues are standardized, the case in which
\(\alpha\) and \(\beta\) are complex numbers is of particular interest.
In this case we are already given~\eqref{eq:sylv-scalar-complex}, and can
solve it directly by~\eqref{eq:sylv-scalar-sol} without additional
preprocessing/postprocessing.
The pseudocode of this special case is listed as Algorithm~\ref{alg:Qsylv}.
The algorithm makes full use of the knowledge that \(\alpha\) and~\(\beta\)
are complex numbers, and is much simpler than the algorithm proposed
in~\cite{GIMT2009} which requires solving a \(4\times4\) linear system.

\begin{algorithm}[!tb]
\caption{Scalar Sylvester equation solver.}
\label{alg:Qsylv}
\begin{algorithmic}[1]
\REQUIRE Two complex numbers \(\alpha\), \(\beta\in\mathbb C\) with
\(\alpha\neq\beta\) and \(\alpha\neq\conj{\beta}\), and a quaternion number
\(\gamma=\gamma_1+\gamma_2\mj\in\mathbb H\).
\ENSURE The solution \(\chi\) of \(\alpha\chi-\chi\beta=\gamma\).

\IF{\(\alpha\ne\beta\) \AND \(\alpha\neq\conj{\beta}\)}
  \STATE \(\chi_1\gets\gamma_1/(\alpha-\beta)\),
         \(\chi_2\gets\gamma_2/(\alpha-\conj{\beta})\).
  \STATE \(\chi\gets\chi_1+\chi_2\mj\).
\ELSE
  \STATE Report exception.
\ENDIF
\end{algorithmic}
\end{algorithm}

We then consider the upper triangular Sylvester equation for a vector, i.e.,
\begin{equation}
\label{eq:QTsylv}
\mat T \mat x-\mat x\lambda=\mat b,
\end{equation}
where \(\mat T\in\mathbb H^{n\times n}\) is upper triangular with complex
diagonal entries, and \(\lambda\in\mathbb C\) is not an eigenvalue of
\(\mat T\).
This problem can be easily solved by \emph{back substitution}.
By partitioning \(\mat T \mat x-\mat x\lambda=\mat b\) into
\[
\bmat{\mat T_{1,1} & \mat T_{1,2} \\ \mat 0 & \mat T_{2,2}}
\bmat{\mat x_1 \\ \mat x_2}-\bmat{\mat x_1 \\ \mat x_2}\lambda
=\bmat{\mat b_1 \\ \mat b_2},
\]
where \(\mat T_{2,2}\in\mathbb C\) is a scalar, we obtain
\begin{subequations}
\begin{align}
\mat T_{1,1} \mat x_1-\mat x_1\lambda&=\mat b_1-\mat T_{1,2}\mat x_2,
\label{eq:sylv-triu-1}\\
\mat T_{2,2}\mat x_2-\mat x_2\lambda&=\mat b_2.
\label{eq:sylv-triu-2}
\end{align}
\end{subequations}
Since~\eqref{eq:sylv-triu-2} is a scalar Sylvester equation, we first
use Algorithm~\ref{alg:Qsylv} to compute \(\mat x_2\).
Then~\eqref{eq:sylv-triu-1} becomes an \((n-1)\times(n-1)\) upper triangular
Sylvester equation, which can be solved recursively.
The back substitution algorithm for solving~\eqref{eq:QTsylv} is listed in
Algorithm~\ref{alg:QTsylv}.%
\footnote{Throughout the paper, we use MATLAB's colon notation to represent
submatrices.}

\begin{algorithm}[!tb]
\caption{Back substitution algorithm for upper triangular Sylvester equations.}
\label{alg:QTsylv}
\begin{algorithmic}[1]
\REQUIRE An upper triangular quaternion matrix
\(\mat T\in\mathbb H^{n\times n}\) with standardized eigenvalues, a complex
number \(\lambda\in\mathbb C\) that is not an eigenvalue of \(\mat T\), and a
vector \(\mat b\in\mathbb H^n\).
\ENSURE The solution \(\mat x\in\mathbb H^n\) of
\(\mat T \mat x-\mat x\lambda=\mat b\).
On exit, \(\mat x\) overwrites \(\mat b\).

\FOR{\(i=n\) \TO \(1\)}
  \STATE Solve the scalar Sylvester quaternion equation
         \(\mat T(i,i)\chi-\chi\lambda=\mat b(i)\) by
         Algorithm~\ref{alg:Qsylv}.
  \STATE Set \(\mat b(i)\gets\chi\).
  \STATE Update \(\mat b(1:i-1)\gets\mat b(1:i-1)-\mat T(1:i-1,i-1)\mat b(i)\).
\ENDFOR
\end{algorithmic}
\end{algorithm}

\subsection{Eigenvectors of the quaternion Schur form}
With the help of the upper triangular Sylvester solver, we are now ready to
compute the eigenvectors of the quaternion Schur form
\begin{equation}
\label{eq:schurformT}
\mat T=\bmat{\lambda_1 & t_{1,2} & \cdots & t_{1,n-1} & t_{1,n} \\
0 & \lambda_2 & \cdots & t_{2,n-1} & t_{2,n} \\
\vdots & \vdots & \ddots & \vdots & \vdots \\
0 & 0 & \cdots & \lambda_{n-1} & t_{n-1,n} \\
0 & 0 & \cdots & 0 &\lambda_n}.
\end{equation}
For simplicity, we assume that \(\mat T\) has \(n\) distinct eigenvalues.
Then we can diagonalize \(\mat T\) by computing all eigenvectors \(\mat T\).

A key observation is that the eigenvector corresponding \(\lambda_k\) is of
the form
\[
\mat x_k=[x_1,\dotsc,x_{k-1},1,0,\dotsc,0]\trans,
\]
i.e., the \(k\)th entry of \(\mat x_k\) is \(1\).
To illustrate this, let us partition the \(k\times k\) leading submatrix
of~\(\mat T\) into
\[
\bmat{\mat T_{1,1} & \mat T_{1,2} \\ \mat 0 & \lambda_k}.
\]
Let \(\mat y\) be the unique solution of the Sylvester equation
\begin{equation}
\label{eq:sylv-lambdai}
\mat T_{1,1}\mat y-\mat y\lambda_k=-\mat T_{1,2}.
\end{equation}
Setting \([x_1,\dotsc,x_{k-1}]\gets\mat y\trans\) yields
\(\mat T\mat x_k=\mat x_k\lambda_k\).
As the proof is constructive, we formulate it as
Algorithm~\ref{alg:eigenvectorsofT}.
We remark that in practice appropriate scaling is needed to avoid unnecessary
overflow;
see, e.g., \texttt{CTREVC}\slash\texttt{ZTREVC} in LAPACK~\cite{LAPACK}.

\begin{algorithm}[!tb]
\caption{Eigenvector computation of the Schur form.}
\label{alg:eigenvectorsofT}
\begin{algorithmic}[1]
\REQUIRE An upper triangular matrix \(\mat T\in\mathbb H^{n\times n}\)
with \(n\) distinct standardized eigenvalues.
\ENSURE A nonsingular upper triangular matrix
\(\mat X\in\mathbb H^{n\times n}\) such that \(\mat X^{-1}\mat T\mat X\) is
diagonal.

\STATE Set \(\mat X(:,1)\gets\mat e_1.\)
\FOR{\(k=2\) \TO \(n\)}
  \STATE Set \(\mat T_{1,1}\gets\mat T(1:k-1,1:k-1)\),
         \(\mat T_{1,2}\gets\mat T(1:k-1,k)\), and
         \(\lambda_k\gets\mat T(k,k)\).
  \STATE Solve the Sylvester equation~\eqref{eq:sylv-lambdai} by
         Algorithm~\ref{alg:QTsylv}.
  \STATE Set \(\mat X(1:k-1,k)\gets\mat y\), \(\mat X(k,k)\gets1\),
         \(\mat X(k+1:n,k)\gets\mat0\).
\ENDFOR
\end{algorithmic}
\end{algorithm}

\section{Eigenvalue swapping and aggressive early deflation}
\label{sec:swap}
In this section, we present an eigenvalue swapping algorithm to reorder the
diagonal entries in the Schur form.
As an important application of the eigenvalue swapping algorithm, we show that
the AED technique carries over to the quaternion QR algorithm.

\subsection{Eigenvalue swapping algorithm}
\label{subsec:swap}
In principle, we can prescribe any ordering of eigenvalues in the Schur form.
However, like the usual QR algorithm for real or complex matrices, the
quaternion QR algorithm does not have much control over the sequence of
deflated eigenvalues.
Hence, in practice, we often need to reorder the eigenvalues after the Schur
form is calculated.
This is achieved by repeatedly swapping consecutive diagonal entries in the
Schur form.
Theorem~\ref{thm:swap} ensures that eigenvalue swapping can be easily
accomplished.

\begin{theorem}
\label{thm:swap}
Let
\[
\mat T=\bmat{t_{1,1} & t_{1,2} \\ 0 & t_{2,2}}\in\mathbb H^{2\times 2},
\]
where \(t_{1,1}\), \(t_{2,2}\in\mathbb C_+\).
Then there exists a unitary matrix \(\mat Q\in\mathbb H^{2\times2}\) such that
\[
\mat Q\herm\mat T\mat Q=\bmat{t_{2,2} & t_{1,2} \\ 0 & t_{1,1}}.
\]
\end{theorem}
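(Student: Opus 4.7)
The plan is to construct $\mat Q$ explicitly from the right eigenvector of $\mat T$ associated with $t_{2,2}$, choosing the second orthonormal column so that the trailing diagonal entry emerges standardized without any post-correction.

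If $t_{1,1}=t_{2,2}$ — equivalently $\eqclass{t_{1,1}}=\eqclass{t_{2,2}}$, since both entries are standardized — take $\mat Q=\mat I$ and there is nothing to do. Otherwise, Lemma~\ref{lem:Sylvester-1x1} provides a unique $y\in\mathbb H$ solving $t_{1,1}y-yt_{2,2}=-t_{1,2}$, and a direct substitution confirms that $\vect v:=[y,1]\trans$ satisfies $\mat T\vect v=\vect v\,t_{2,2}$. Setting $\rho:=\sqrt{1+\abs y^2}=\norm{\vect v}$, I would take
\[
\mat Q:=\frac{1}{\rho}\bmat{y & 1 \\ 1 & -\conj y},
\]
whose first column is the normalized eigenvector $\hat{\vect v}=\vect v/\rho$ and whose second column $\vect w:=[1,-\conj y]\trans/\rho$ is the orthonormal completion with \emph{real} leading entry; a direct check shows $\mat Q\herm\mat Q=\mat I$.

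Since $\rho\in\mathbb R$ commutes with $t_{2,2}$, one has $\mat T\hat{\vect v}=\hat{\vect v}\,t_{2,2}$, so the first column of $\mat Q\herm\mat T\mat Q$ is $\mat e_1\,t_{2,2}$ and the matrix is already upper triangular with $t_{2,2}$ at position $(1,1)$. The step I expect to be the crux — the one most exposed to non-commutativity — is verifying that the trailing $(2,2)$ entry $\vect w\herm\mat T\vect w$ equals \emph{exactly} $t_{1,1}$. Expanding this inner product and eliminating $t_{1,2}$ via the Sylvester identity $t_{1,2}=yt_{2,2}-t_{1,1}y$, the two terms carrying $t_{2,2}$ cancel and one is left with
\[
\vect w\herm\mat T\vect w=\frac{1}{\rho^2}\bigl(t_{1,1}+t_{1,1}\abs y^2\bigr)=\frac{t_{1,1}(1+\abs y^2)}{\rho^2}=t_{1,1}.
\]
The key subtlety is that the choice $w_1=1/\rho\in\mathbb R$ is what lets the $t_{1,1}$ factor out cleanly; a generic orthonormal completion would instead produce $\conj\omega\,t_{1,1}\,\omega$ for some unit quaternion $\omega$ — still in $\eqclass{t_{1,1}}$, but in general requiring a further diagonal unitary correction $\diag(1,\omega)$ to be put into standardized form. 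With the present choice no such correction is needed, and $\mat Q\herm\mat T\mat Q$ has diagonal $(t_{2,2},t_{1,1})$, which is the desired form.
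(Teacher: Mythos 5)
Your construction is, up to a sign in the second column, exactly the matrix \(\mat G\) of the paper's proof (take \(s=1/\rho\), \(c=y/\rho\)), and your verification that the diagonal of \(\mat Q\herm\mat T\mat Q\) becomes \((t_{2,2},t_{1,1})\) — including the observation that the real leading entry of the second column is what makes the trailing entry come out as \(t_{1,1}\) itself rather than some \(\conj\omega\,t_{1,1}\,\omega\in\eqclass{t_{1,1}}\) — is correct and matches the paper's computation. However, there is a gap relative to the theorem as stated: the claim is not merely that the diagonal entries are swapped and standardized, but that the \((1,2)\) entry of \(\mat Q\herm\mat T\mat Q\) equals \emph{exactly} \(t_{1,2}\). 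Your \(\mat Q\) does not achieve this: a direct computation with your \(\vect w\) gives
\[
\hat{\vect v}\herm\mat T\vect w=\conj y\,t_{1,1}-t_{2,2}\conj y,
\]
which is in general different from \(t_{1,2}=y\,t_{2,2}-t_{1,1}y\) (the paper's \(\mat G\) likewise only produces \(\tilde t_{1,2}=t_{2,2}\conj\chi-\conj\chi t_{1,1}\)). The paper closes this gap by a further right-multiplication with a diagonal unitary \(\diag\set{\mu_1,\mu_2}\), with \(\mu_1,\mu_2\) chosen complex unimodular factors depending on \(\arg(\chi_1)\) and \(\arg(t_{1,1}-\conj t_{2,2})\), which fixes the diagonal (since \(\conj\mu_i\,t\,\mu_i=t\) would need care, but the \(\mu_i\) are chosen so the standardized diagonal is preserved) while restoring the off-diagonal entry to \(t_{1,2}\). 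So your argument proves the practically relevant weaker statement — which is in fact all that Algorithm~\ref{alg:swap} uses, as the paper remarks that preserving \(t_{1,2}\) is "often not worth the cost" — but to prove the theorem as written you must either supply the analogue of the \(\diag\set{\mu_1,\mu_2}\) correction or weaken the statement to allow an arbitrary \((1,2)\) entry.
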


\begin{proof}
The case that \(t_{1,1}=t_{2,2}\) is trivial because we can simply choose
\(\mat Q=\mat I_2\).
In the following we assume that \(t_{1,1}\neq t_{2,2}\).

According to Lemma~\ref{lem:Sylvester-1x1}, there exists a unique solution
\(\chi\in\mathbb H\) of the Sylvester equation
\[
t_{1,1}\chi-\chi t_{2,2}=-t_{1,2}.
\]
This Sylvester equation can be equivalently reformulated as
\[
\bmat{t_{1,1} & t_{1,2} \\ 0 & t_{2,2}}\bmat{\chi \\ 1}
=\bmat{\chi \\ 1}t_{2,2},
\quad\text{or}\quad
[1,-\chi]\bmat{t_{1,1} & t_{1,2} \\ 0 & t_{2,2}}=t_{1,1}[1,-\chi].
\]
Define the unitary matrix
\begin{equation}
\label{eq:Givens-G}
\mat G=\bmat{c & -s \\ s & \conj c},
\end{equation}
where
\begin{equation}
\label{eq:Givens-cs}
s=\bigl(1+\lvert\chi\rvert^2\bigr)^{-1/2}, \qquad c=s\chi.
\end{equation}
Then we have
\begin{equation}
\label{eq:swap}
\mat G\herm\mat T\mat G=\bmat{t_{2,2} & \tilde t_{1,2} \\ 0 & t_{1,1}},
\end{equation}
where \(\tilde t_{1,2}=t_{2,2}\conj\chi-\conj\chi t_{1,1}\).

In order to further transform \(\tilde t_{1,2}\) to \(t_{1,2}\),
we express \(\chi\) as \(\chi=\chi_1+\chi_2\mj\) such that \(\chi_1\),
\(\chi_2\in\mathbb C\), and choose
\(\mat Q=\mat G\cdot\diag\set{\mu_1,\mu_2}\), where
\begin{align*}
\mu_1&=\mi\cdot\exp\bigl(-\mi\cdot\arg(\chi_1)-\mi\cdot\arg(t_{1,1}-\conj t_{2,2})\bigr),\\
\mu_2&=\mi\cdot\exp\bigl(\mi\cdot\arg(\chi_1)-\mi\cdot\arg(t_{1,1}-\conj t_{2,2})\bigr).
\end{align*}
Here, the notation \(\arg(\cdot)\) represents the argument of a complex number, and \(\arg(0)\) is set to \(0\).
It can then be verified that \(\mat Q\herm \mat Q=\mat I_2\) and
\[
\mat Q\herm \mat T \mat Q=\bmat{t_{2,2} & t_{1,2} \\ 0 & t_{1,1}}.
\qedhere
\]
\end{proof}

From a computational perspective, it makes more sense to use \(\mat G\)
instead of \(\mat Q\) for eigenvalue swapping.
Strictly preserving the entry \(t_{1,2}\) is often not worth the cost to
calculate \(\mu_1\) and \(\mu_2\).
Therefore, we propose Algorithm~\ref{alg:swap} based on~\eqref{eq:swap}.

A straightforward application of Algorithm~\ref{alg:swap} is to move a few
selected eigenvalues to the top-left corner of the Schur form \(\mat T\) and
form an orthonormal basis of the corresponding invariant `subspace' (more
rigorously, the invariant right \(\mathbb H\)-submodule).
A more advanced application is the AED technique, which will be discussed in
the subsequent subsection.

\begin{algorithm}[!tb]
\caption{Eigenvalue swapping algorithm}
\label{alg:swap}
\begin{algorithmic}[1]
\REQUIRE An upper triangular matrix \(\mat T\in\mathbb H^{2\times2}\) with
standardized eigenvalues.
\ENSURE A unitary matrix \(\mat G\) that swaps \(\mat T(1,1)\) and
\(\mat T(2,2)\).
On exit, \(\mat T\) is overwritten by \(\mat G\herm\mat T\mat G\).

\IF{\(\mat T(1,1)=\mat T(2,2)\)}
  \STATE Set \(\mat{G}\gets\mat{I}_2\).
\ELSE
  \STATE Solve the scalar Sylvester equation
         \(\mat T(1,1)\chi-\chi \mat T(2,2)=-\mat T(1,2)\) by
         Algorithm~\ref{alg:Qsylv}.
  \STATE Set \(\mat G\) according to~\eqref{eq:Givens-G}
         and~\eqref{eq:Givens-cs}.
  \STATE Swap \(\mat T(1,1)\leftrightarrow\mat T(2,2)\) and set
         \(\mat T(1,2)\gets t_{2,2}\conj\chi-\conj\chi t_{1,1}\).
\ENDIF
\end{algorithmic}
\end{algorithm}

\subsection{Quaternion aggressive early deflation}
\label{subsec:AED}
Aggressive early deflation (AED) is a modern technique proposed by Braman,
Byers, and Mathias to significantly enhance the convergence of the QR
algorithm~\cite{BBM2002b,Byers2007,GKK2010,GKKS2015,KKS2012,Kressner2008}.
Given an unreduced upper Hessenberg matrix \(\mat H\), the AED technique
consists of the following three stages;
see also Figure~\ref{fig:AED}.
\begin{enumerate}
\item
\textbf{Schur decomposition}:
Compute the Schur decomposition of the trailing \(n_{\win}\times n_{\win}\)
submatrix of \(\mat H\) (we call this submatrix the \emph{AED window}), and
apply the corresponding unitary transformation to \(\mat H\).
This produces a `spike' of dimension \(n_{\win}\) to the left of the AED
window.

\item
\textbf{Convergence test}:
If the bottom entry of the `spike' has a tiny magnitude, we replace it by zero
and deflate the corresponding eigenvalue (i.e., the diagonal entry of
\(\mat H\));
otherwise, the eigenvalue is marked as undeflatable and is moved towards the
top-left corner of the AED window by repeatedly applying the eigenvalue
swapping algorithm (i.e., Algorithm~\ref{alg:swap}).
Repeat this convergence test until all eigenvalues of the AED window are
either deflated or marked as undeflatable.

\item
\textbf{Hessenberg reduction}:
Reduce the undeflatable part of \(\mat H\) back to the upper Hessenberg form.
\end{enumerate}

\begin{figure}[!tb]
\centering
\includegraphics[width=\textwidth]{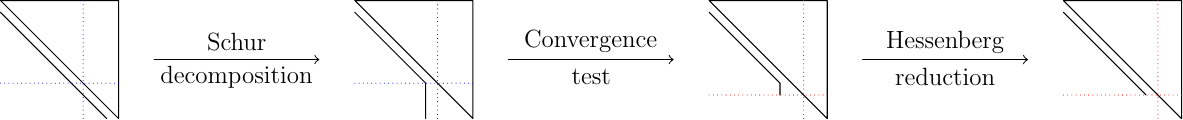}
\caption{A visual illustration of AED.}
\label{fig:AED}
\end{figure}

Because algorithms for Hessenberg reduction and Schur decomposition already
exist for quaternion matrices, we have gathered all the necessary building
blocks for performing AED for quaternion matrices with the help of the
eigenvalue swapping algorithm.

We note that in a multishift variant of the quaternion QR algorithm, the
undeflatable eigenvalues left by AED can be used as the shifts.
However, the development of the small-bulge multishift QR algorithm is beyond
the scope of this work.

Although the AED technique was developed to enhance the convergence of the
small-bulge multishift QR algorithm~\cite{BBM2002a}, according
to~\cite{Kressner2005}, even the very traditional Francis QR algorithm can be
accelerated by AED.
It is natural to ask if the AED technique remains effective when applied to
quaternion matrices.
Fortunately, existing theoretical analyses on AED~\cite{BBM2002b,Kressner2008}
suffice to predict the effectiveness of AED in the quaternion QR algorithm.
In fact, by embedding the quaternion matrix into a complex matrix with double
the dimensions, the AED technique---essentially the extraction of Ritz
pairs---has more opportunities to detect converged eigenvalues compared to
classical deflation based on testing subdiagonal entries.

Finally, we remark that the eigenvalue swapping algorithm also allows us to
develop other advanced algorithms.
For example, the Krylov--Schur algorithm for solving large-scale eigenvalue
problems~\cite{Stewart2002}, which is closely related to
AED~\cite{Kressner2005,Kressner2008}, also carries over to quaternion
matrices.

\section{Numerical experiments}
\label{sec:experiments}
In this section, we conduct some numerical experiments to evaluate the
effectiveness of the proposed algorithms.
These experiments were carried out using MATLAB 2023a and the \texttt{QTFM
toolbox} version~3.4~\cite{QTFM}, on a Linux cluster comprising ten 48-core
\texttt{Intel Xeon Gold 6342} CPUs, each with a frequency of 2.80~GHz, and
20~GB of memory.
We ran our MATLAB programs on one CPU in this cluster.
All computations were performed using IEEE double-precision floating-point
numbers, with a machine precision of
\(\epsilon=2^{-52}\approx2.22\times10^{-16}\).

We define two classes of randomly generated quaternion matrices as follows.
\begin{itemize}
\item
\texttt{fullrand}: A dense square matrix whose entries are randomly generated.
\item
\texttt{hessrand}: A dense upper Hessenberg matrix whose nonzero entries are
randomly generated.
\end{itemize}
These class of matrices are frequently used to test non-Hermitian dense
eigensolvers~\cite{GKK2010,GKKS2015,KKS2012}.
For each nonzero entry, we generate a random unit quaternion \(\omega\) using
the \texttt{randq()} function from the \texttt{QTFM toolbox}, and then
multiply it by a random real number \(\alpha\) uniformly distributed in the
range of \([0,1]\).
The resulting value of \(\omega\cdot\alpha\) is then used as the matrix entry.

\subsection{Performance tests}
In the following we test the effectiveness of the AED technique in the
quaternion QR algorithm~(i.e., Algorithm~\ref{alg:quaternion_schur}).
The size of the AED window, \(n_{\win}\), is adaptively adjusted based on the
matrix size \(n\), following the strategy used in LAPACK's
\texttt{IPARMQ}~\cite{Byers2007}.
The threshold for skipping a QR sweep, commonly known as `\texttt{NIBBLE}', is
set to \(14\%\), the default value used in LAPACK's
\texttt{IPARMQ}~\cite{Byers2007}.

Tables~\ref{tab:fullrand} and~\ref{tab:hessrand} present the detailed results
for \texttt{fullrand} and \texttt{hessrand} matrices, respectively, with
matrix dimensions ranging from \(64\times64\) to \(1024\times1024\).
The tables contain the total number of QR sweeps, total execution time (in
seconds), time spent constructing the Schur vectors \(\mat Q\), and time spent
on AED for each test case.

The results show that the quaternion QR algorithm incorporated with AED
consistently performs fewer QR sweeps and has a shorter execution time than
the original quaternion QR algorithm.
The reduction in QR sweeps becomes more significant as the matrix size
increases (see Figure~\ref{fig:iter}), leading to a substantial decrease in
total execution time.

Although our implementations are only preliminary ones in MATLAB, we expect
that the AED technique remains highly effective in a high performance
implementation of the quaternion QR algorithm, since the number of QR sweeps
can be significantly reduced.

\begin{table}[!tb]
\centering
\small
\caption{Performance tests on the QR algorithm, with and without AED, for
\texttt{fullrand} matrices.}
\label{tab:fullrand}
\begin{tabular}{cccccc}
\toprule
strategy & matrix size & total QR sweeps & total time & time to construct Q & time for AED \\
\midrule
QR+AED & 64   & 173  & $6.88\times10^0$ & $7.08\times10^{-1}$ & $3.34\times10^{0}$ \\
QR     & 64   & 200  & $4.11\times10^0$ & $8.12\times10^{-1}$ & N/A \\
QR+AED & 128  & 267  & $2.39\times10^1$ & $2.73\times10^{0}$  & $1.13\times10^{1}$ \\
QR     & 128  & 399  & $1.95\times10^1$ & $4.26\times10^{0}$  & N/A \\
QR+AED & 256  & 420  & $9.66\times10^1$ & $1.26\times10^{1}$  & $4.05\times10^{1}$ \\
QR     & 256  & 784  & $1.25\times10^2$ & $2.81\times10^{1}$  & N/A \\
QR+AED & 512  & 647  & $6.86\times10^2$ & $1.20\times10^{2}$  & $2.21\times10^{2}$ \\
QR     & 512  & 1530 & $1.61\times10^3$ & $3.82\times10^{2}$  & N/A \\
QR+AED & 1024 & 935  & $8.63\times10^3$ & $1.75\times10^{3}$  & $1.60\times10^{3}$ \\
QR     & 1024 & 3095 & $2.13\times10^4$ & $5.23\times10^{3}$  & N/A \\
\bottomrule
\end{tabular}
\end{table}

\begin{table}[!tb]
\centering
\small
\caption{Performance tests on the QR algorithm, with and without AED, for
\texttt{hessrand} matrices.}
\label{tab:hessrand}
\begin{tabular}{cccccc}
\toprule
strategy & matrix size & total QR sweeps & total time & time to construct Q & time for AED \\
\midrule
QR+AED & 64   & 159  & $7.84\times10^0$ & $7.41\times10^{-1}$ & $4.14\times10^0$ \\
QR     & 64   & 202  & $4.13\times10^0$ & $8.15\times10^{-1}$ & N/A \\
QR+AED & 128  & 262  & $2.86\times10^1$ & $3.02\times10^{0}$  & $1.49\times10^1$ \\
QR     & 128  & 406  & $2.06\times10^1$ & $4.43\times10^{0}$  & N/A \\
QR+AED & 256  & 330  & $8.49\times10^1$ & $8.82\times10^{0}$  & $4.57\times10^1$ \\
QR     & 256  & 880  & $1.41\times10^2$ & $3.11\times10^{1}$  & N/A \\
QR+AED & 512  & 427  & $6.12\times10^2$ & $9.33\times10^{1}$  & $2.59\times10^2$ \\
QR     & 512  & 1925 & $2.20\times10^3$ & $5.13\times10^{2}$  & N/A \\
QR+AED & 1024 & 919  & $9.52\times10^3$ & $1.87\times10^{3}$  & $2.02\times10^3$ \\
QR     & 1024 & 3915 & $2.57\times10^4$ & $6.32\times10^{3}$  & N/A \\
\bottomrule
\end{tabular}
\end{table}

\begin{figure}
\centering
\includegraphics[width=\textwidth]{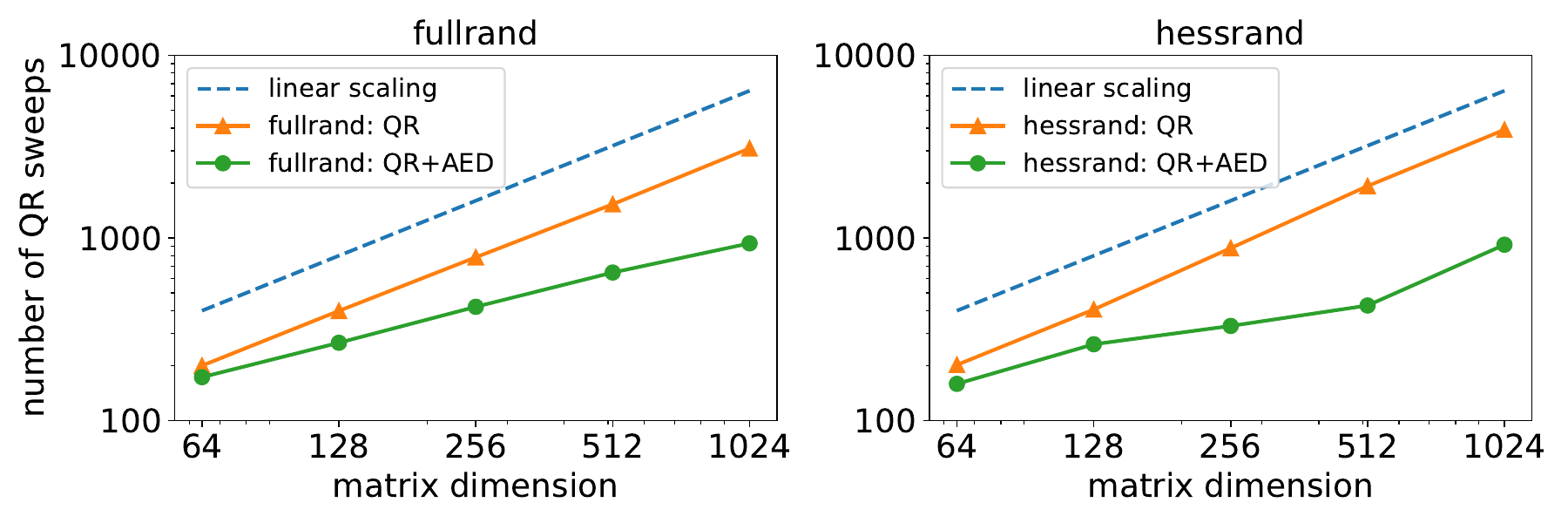}
\caption{The number of QR sweeps with respect to the matrix dimension.}
\label{fig:iter}
\end{figure}

\subsection{Stability tests}
In the following, we evaluate the backward stability of the QR algorithm, as
well as the eigenvector computation.
For the Schur decomposition \(\mat A=\mat Q\mat T\mat Q\herm\) and the
spectral decomposition \(\mat A=\mat X\mat\Lambda\mat X^{-1}\), we measure the
following quantities:
\[
e_1=\frac{1}{\sqrt n}\lVert\mat Q\herm\mat Q-\mat I\rVert_{\fro},
\qquad
e_2=\frac{\lVert\mat Q\herm\mat A\mat Q-\mat T\rVert_{\fro}}
{\lVert\mat A\rVert_{\fro}},
\qquad
e_3=\frac{\lVert\mat A\mat X-\mat X\mat\Lambda\rVert_{\fro}}
{(\lVert\mat A\rVert_{\fro}+\lVert\mat\Lambda\rVert_{\fro})
\lVert\mat X\rVert_{\fro}}.
\]
The results for \texttt{fullrand} and \texttt{hessrand} matrices are listed in
Tables~\ref{tab:fullrand_err} and~\ref{tab:hessrand_err}, respectively.
We can see that both the AED strategy and the eigenvector computation are
numerically stable.
In most test cases, when the AED strategy is incorporated, the backward errors
are slightly lower.
This is likely because AED effectively reduces the total number of
floating-point operations.

\begin{table}[!tb]
\centering
\caption{Stability tests on the QR algorithm, with and without AED, for
\texttt{fullrand} matrices.}
\label{tab:fullrand_err}
\begin{tabular}{ccccc}
\toprule
strategy & matrix size & \(e_1\) & \(e_2\) & \(e_3\) \\
\midrule
QR+AED & 64   & $9.2\times10^{-15}$ & $6.4\times10^{-15}$ & $6.4\times10^{-16}$ \\
QR     & 64   & $9.0\times10^{-15}$ & $6.4\times10^{-15}$ & $7.2\times10^{-16}$ \\
QR+AED & 128  & $1.3\times10^{-14}$ & $8.5\times10^{-15}$ & $6.9\times10^{-16}$ \\
QR     & 128  & $1.3\times10^{-14}$ & $9.2\times10^{-15}$ & $7.0\times10^{-16}$ \\
QR+AED & 256  & $1.7\times10^{-14}$ & $1.1\times10^{-14}$ & $6.0\times10^{-16}$ \\
QR     & 256  & $1.7\times10^{-14}$ & $1.2\times10^{-14}$ & $6.6\times10^{-16}$ \\
QR+AED & 512  & $2.1\times10^{-14}$ & $1.3\times10^{-14}$ & $5.1\times10^{-16}$ \\
QR     & 512  & $2.5\times10^{-14}$ & $1.7\times10^{-14}$ & $6.9\times10^{-16}$ \\
QR+AED & 1024 & $2.5\times10^{-14}$ & $1.6\times10^{-14}$ & $4.3\times10^{-16}$ \\
QR     & 1024 & $3.4\times10^{-14}$ & $2.5\times10^{-14}$ & $6.8\times10^{-16}$ \\
\bottomrule
\end{tabular}
\end{table}

\begin{table}[!tb]
\centering
\caption{Stability tests on the QR algorithm, with and without AED, for
\texttt{hessrand} matrices.}
\label{tab:hessrand_err}
\begin{tabular}{ccccc}
\toprule
strategy & matrix size & \(e_1\) & \(e_2\) & \(e_3\) \\
\midrule
QR+AED & 64   & $1.0\times10^{-14}$ & $6.1\times10^{-15}$ & $3.9\times10^{-16}$ \\
QR     & 64   & $8.8\times10^{-15}$ & $6.0\times10^{-15}$ & $4.4\times10^{-16}$ \\
QR+AED & 128  & $1.3\times10^{-14}$ & $8.0\times10^{-15}$ & $2.9\times10^{-16}$ \\
QR     & 128  & $1.3\times10^{-14}$ & $8.7\times10^{-15}$ & $2.8\times10^{-16}$ \\
QR+AED & 256  & $1.7\times10^{-14}$ & $1.0\times10^{-14}$ & $1.7\times10^{-16}$ \\
QR     & 256  & $1.8\times10^{-14}$ & $1.3\times10^{-14}$ & $2.1\times10^{-16}$ \\
QR+AED & 512  & $2.2\times10^{-14}$ & $1.2\times10^{-14}$ & $1.2\times10^{-16}$ \\
QR     & 512  & $2.7\times10^{-14}$ & $1.8\times10^{-14}$ & $8.8\times10^{-17}$ \\
QR+AED & 1024 & $2.3\times10^{-14}$ & $9.2\times10^{-15}$ & $4.8\times10^{-17}$ \\
QR     & 1024 & $3.6\times10^{-14}$ & $2.3\times10^{-14}$ & $5.8\times10^{-17}$ \\
\bottomrule
\end{tabular}
\end{table}

\section{Conclusions}
\label{sec:conclusions}

In this paper, we discuss several aspects of the dense non-Hermitian
quaternion eigenvalue problem.
We develop algorithms for eigenvector computation from the Schur form, and
the eigenvalue swapping algorithm.
As an application of the eigenvalue swapping algorithm, we discuss the
aggressive early deflation (AED) technique for the quaternion QR algorithm.
These developments fill the gap in existing dense quaternion eigensolvers---we
have come to a point where no theoretical obstacle remains for the non-Hermitian
quaternion QR algorithm.
What is left in this direction is mainly the work of high performance
computing---how to implement efficient dense quaternion eigensolvers.
This includes, but not limited to, the development of efficient quaternion
BLAS libraries~\cite{GV2008,WL2019}, efficient Hessenberg
reduction~\cite{KK2011,KK2012,QV2006}, small-bulge multishift QR
algorithm~\cite{BBM2002a,Byers2007,GKK2010,GKKS2015}, level-3 eigenvalue
reordering algorithm~\cite{GKK2009,Kressner2006}, and level-3 eigenvector
computation~\cite{SKK2020}.

\section*{Acknowledgment}
Z. Jia is supported by the National Key R\&D program of China
(No.~2023YFA1010101), the National Natural Science Foundation of
China (Nos.~12090011, 12171210), and the Major Projects of Universities in
Jiangsu Province (No.~21KJA110001).


\end{document}